\documentclass[10pt]{amsart}
\usepackage{amsmath}
\usepackage[usenames,dvipsnames]{color}
\usepackage{parskip}
\usepackage{amsfonts}
\usepackage{amscd}
\usepackage[centertags]{amsmath}
\usepackage{amssymb}
\usepackage[all,cmtip]{xy}
\usepackage[english]{babel}
\usepackage{tabularx}
\usepackage{mathtools}
\usepackage{amsxtra}
\usepackage{euscript}
\usepackage[T1]{fontenc}
\usepackage{doc, exscale, fontenc, latexsym, syntonly}
\usepackage{amsfonts}
\usepackage{amsthm}
\usepackage{graphicx}
\usepackage{xcolor}
\usepackage{tikz-cd}

\numberwithin{figure}{section}
\numberwithin{table}{section}
%%%%%%%%%%%%%%%%%%%%%%%%%%%%%%%%%%%%%%%%%%%%%%%%%

\newcommand{\scat}{\ensuremath{\mathrm{scat}}}

\newcommand{\id}{\ensuremath{\mathrm{1}}}
\newcommand{\TC}{\ensuremath{\mathrm{TC}}}

\newcommand{\pr}{\ensuremath{\mathrm{pr}}}

\newtheorem{theorem}{Theorem}[section]
\newtheorem{definition}{Definition}[section]
\newtheorem{corollary}{Corollary}[section]

\newtheorem{example}{Example}[section]
\newtheorem{proposition}{Proposition}[section]
\newtheorem{lemma}{Lemma}[section]

\begin{document}

\title{Higher Analogues of Discrete Topological Complexity}

\author{H\.{I}lal Alabay\textsuperscript{1}, Ay\c{s}e Borat\textsuperscript{1,3}, Esra C\.{I}hang\.{I}rl\.{I}\textsuperscript{1}, Esma D\.{I}r\.{I}can Erdal\textsuperscript{2}}

\date{\today}

\address{\textsc{Hilal Alabay}
Bursa Technical University\\
Faculty of Engineering and Natural Sciences\\
Department of Mathematics\\
Bursa, Turkiye}
\email{hilalalabay@hotmail.com}

\address{\textsc{Ay\c{s}e Borat}
Bursa Technical University\\
Faculty of Engineering and Natural Sciences\\
Department of Mathematics\\
Bursa, Turkiye}
\email{ayse.borat@btu.edu.tr} 

\address{\textsc{Esra Cihangirli}
Bursa Technical University\\
Faculty of Engineering and Natural Sciences\\
Department of Mathematics\\
Bursa, Turkiye}
\email{esra.cihangirlii@gmail.com} 

\address{\textsc{Esma Dirican Erdal}
Istanbul Technical University\\
Faculty of Arts and Sciences\\
Department of Mathematics Engineering\\
Istanbul, Turkiye}

\address{
I\c{s}\i k University\\ 
Faculty of Engineering and Natural Sciences\\
Department of Mathematics\\
34980 Istanbul, Turkiye}
\email{ediricanerdal@itu.edu.tr} 
\email{esma.diricanerdal@isikun.edu.tr}

\subjclass[2010]{55M30,55U05}

\keywords{discrete topological complexity, simplicial Lusternik Schnirelmann category, simplicial complex}

\begin{abstract} In this paper, we introduce the $n-$th discrete topological complexity and study its properties such as its relation with simplicial Lusternik-Schnirelmann category and how the higher dimensions of discrete topological complexity relate with each other. Moreover, we find a lower bound of $n-$discrete topological complexity which is given by the $n-$th usual topological complexity of the geometric realisation of that complex. Furthermore, we give an example for the strict case of that lower bound. 
\end{abstract}

\maketitle

\footnotetext[1]{The authors were supported by the Scientific and Technological Research Council of Turkiye (T\"{U}B\.{I}TAK) [grant number 122F295].}
\footnotetext[2]{The author was supported by \.{I}T\"{U}-DOSAP [grant number 43959].}
\footnotetext[3]{The corresponding author.}

%%%%%%%%%%%%%%%%%%%%%%%%%%%%%%%%%%%%%%%%%%%%%%%%%%%%%
\section{Introduction}
%%%%%%%%%%%%%%%%%%%%%%%%%%%%%%%%%%%%%%%%%%%%%%%%%%%%%

Topological complexity  is a topological invariant that was introduced by Farber in \cite{F2}. It is intended to solve problems such as motion planning in robotics. To catch this aim, a computable algorithm is needed, for each pair of points of 
the configuration space of a mechanical or physical device, a path connecting them in a continuous way. 
Farber used a well-known map in algebraic topology that is a section of the path-fibration and he interpreted that algorithm in terms of this map. In 2010, Rudyak introduced a notion of higher topological complexity in \cite{R}. After that a discrete version of topological complexity is established by Fernandez-Ternero, Macias-Virgos and Vilches in \cite{FMMV}. 

The importance of discretisation is based on the fact that many motion planning methods transform a continuous problem into a discrete one. So the aim of the present paper is to extend the Rudyak's approach to discrete version. Namely, we define the $n-$th discrete topological complexity.

Before we introduce the $n-$th discrete topological complexity, let us recall some known definitions and theorems. 

%\begin{definition}
%The svarc genus $\secat(f)$ of a map $f : X \rightarrow Y$ is the least non-negative integer $n$ such that there exists an open cover $V_0, V_1, \dots, V_n$ of $Y$ and a continuous map $s_i : V_i \rightarrow X$ for each $i\in\{0,1,\ldots,n\}$ satisfying that $f \circ s_i$ is homotopic to the inclusion $V_i \hookrightarrow Y$.
%\end{definition}

\begin{definition}
Let K be a simplicial complex. An edge path in K is a finite or infinite sequence of vertices such that any two consecutive vertices span an edge. We say that K is edge path connected if any two vertices can be joined by a finite edge path. 
\end{definition}

\begin{definition}
Two simplicial maps $\varphi, \psi : K \to L$ are said to be contiguous (denoted by $\varphi \sim_c \psi$) if for  every simplex $\{v_0, \dots, v_k\}$ in $K$, $\{\varphi(v_0), \dots , \varphi(v_k),\psi(v_0), \dots ,$\\
$\psi(v_k)\}$ constitutes a simplex in $L$.
    
\end{definition}

\begin{definition}
Two simplicial maps $\varphi, \psi : K \rightarrow L $ are said to be in the same contiguity class (denoted by $\varphi \sim \psi $) if there exists a finite sequence of simplicial maps $\varphi_i : K \rightarrow L$ for $i=0,1,\dots m$ such that $\varphi = \varphi_1 \sim_c \varphi_2 \sim_c \dots \sim_c \varphi_m= \psi$.
\end{definition}

For two simplicial complexes $K$ and $L$, if there is simplicial maps $\varphi: K\rightarrow L$ and $\psi: L\rightarrow K$ such that $\varphi\circ \psi \sim \id_K$ and  $\psi\circ \varphi \sim \id_{L}$, then $K$ and $L$ are said to have the same strong homotopy type, and is denoted by $K\sim L$. If $K\sim\{v_0\}$ where $v_0$ be a vertex in $K$, then $K$ is said to be strongly collapsible.

The cartesian product of simplicial complexes is not necessarily a simplicial complex. So a "new" product on simplicial complexes is defined to make their product into a simplicial complex. It is the categorical product of simplicial complexes and for two simplicial complexes $K_1$ and  $K_2$, it is defined as follows.

\begin{itemize}
\item[(i)] The set of vertices is defined by $V(K_1\prod K_2):=V(K_1)\times V(K_2)$.
\item[(ii)] If $p_i: V(K_1\prod K_2)\rightarrow V(K_i)$ is the projection map for $i=1,2$, then a simplex $\sigma$ is said to be in $K_1\prod K_2$ if $p_1(\sigma)\in K_1$ and $p_2(\sigma)\in K_2$.
\end{itemize} (for more details, see \cite{K}).

\begin{definition} \cite{FMV}
Let K be a simplicial complex. A subcomplex $\Omega\subset K$ is said to be categorical if the inclusion $i:\Omega\hookrightarrow K$ and a constant map $c_{v_0}: \Omega\rightarrow K$, where $v_0 \in K$ is some fixed vertex, are in the same contiguity class.
\end{definition}

\begin{definition} \cite{FMV} The simplicial Lusternik-Schnirelmann category $\scat(K)$ is the least integer $k\ge 0$ such that there exist categorical subcomplexes $\Omega_0,\Omega_1,\dots, \Omega_k$ of $K$ covering $K$.
\end{definition}

\begin{definition} \cite{FMMV} 
We say that $\Omega \subset K^2$ is a Farber subcomplex if there is a simplicial map $\sigma : \Omega \to K$ such that $\Delta\circ \sigma$ $\sim \iota_{\Omega}$ where $\Delta : K \to K^2$, $\Delta(v) = (v,v)$ is the diagonal map and $\iota_{\Omega} : \Omega \hookrightarrow K^2$ is the inclusion map.
\end{definition}

\begin{definition} \cite{FMMV} 
The discrete topological complexity $\TC(K)$ of a simplicial complex $K$ is the least non-negative integer $k$ such that $K^2$ can be covered by $k+1$ Farber subcomplexes. More precisely, $K^2 = \Omega_0 \cup \cdots \cup \Omega_k$ and there exist simplicial maps $\sigma_j : \Omega_j \to K$ satisfying $\Delta \circ \sigma_j \sim \iota_j$ where $\iota_j : \Omega_j \hookrightarrow K^2$ are inclusions for each $j = 0,\dots,k$. 
\end{definition}

%%%%%%%%%%%%%%%%%%%%%%%%%%%%%%%%%%%%%%%%%%%%%%%%%%%%%
\section{Higher Analogues of Discrete Topological Complexity}
%%%%%%%%%%%%%%%%%%%%%%%%%%%%%%%%%%%%%%%%%%%%%%%%%%%%%

\begin{proposition}
\label{prop 2.9}
If $\varphi, \psi : K \to L$ are in the same contiguity class, then so are $\varphi^n$ and $\psi^n$.
\end{proposition}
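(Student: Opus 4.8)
The plan is to read $\varphi^n\colon K^n\to L^n$ as the $n$-fold product map $\varphi^n(v_1,\dots,v_n)=(\varphi(v_1),\dots,\varphi(v_n))$, and then to prove the statement in two stages: first reduce from the contiguity class to a single contiguity step, and then verify that single step factor by factor in the categorical product. As a preliminary I would note that $\varphi^n$ is indeed simplicial: if $\sigma$ is a simplex of $K^n$, then each projection $p_i(\sigma)$ is a simplex of $K$, so $p_i(\varphi^n(\sigma))=\varphi(p_i(\sigma))$ is a simplex of $L$, whence $\varphi^n(\sigma)$ lies in $L^n$ by the defining property of the product.

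For the reduction, I would invoke the definition of the contiguity class: there is a finite chain $\varphi=\varphi_0\sim_c\varphi_1\sim_c\cdots\sim_c\varphi_m=\psi$. Since $\sim$ is generated by $\sim_c$, it suffices to establish the single-step claim that $\varphi\sim_c\psi$ implies $\varphi^n\sim_c\psi^n$; applying this to each consecutive pair $\varphi_j,\varphi_{j+1}$ then produces a chain $\varphi^n=\varphi_0^n\sim_c\cdots\sim_c\varphi_m^n=\psi^n$, giving $\varphi^n\sim\psi^n$. So the whole proposition collapses to this one contiguity step.

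To prove the single step, I would take an arbitrary simplex $\sigma=\{w_0,\dots,w_k\}$ of $K^n$, write $w_j=(v_j^1,\dots,v_j^n)$, and ask whether $\{\varphi^n(w_0),\dots,\varphi^n(w_k),\psi^n(w_0),\dots,\psi^n(w_k)\}$ is a simplex of $L^n$. By the categorical product definition it is enough to check that each projection $p_i$ of this set is a simplex of $L$, and that projection is precisely $\{\varphi(v_0^i),\dots,\varphi(v_k^i),\psi(v_0^i),\dots,\psi(v_k^i)\}$. Here the key point is that $p_i(\sigma)=\{v_0^i,\dots,v_k^i\}$ is a simplex of $K$, so the hypothesis $\varphi\sim_c\psi$ applied to this simplex says exactly that the displayed projection is a simplex of $L$; since this holds for every $i$, the full set is a simplex of $L^n$.

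The argument is essentially bookkeeping, and I expect the only point needing care to be the first reduction together with the clean coordinatewise behaviour: one must be sure that $p_i(\sigma)$ really is a simplex of $K$ (a face, with any repeated vertices collapsed) to which contiguity legitimately applies, and that the product map and the product complex interact coordinatewise as claimed. Beyond that I anticipate no obstacle, since the categorical product is defined precisely so that membership of a simplex is detected by its projections.
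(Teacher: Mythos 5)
Your proposal is correct and follows essentially the same route as the paper's proof: reduce to a single contiguity step (since $\sim$ is generated by $\sim_c$), then verify that step coordinatewise, using the fact that membership in the categorical product $L^n$ is detected by the projections, to which the hypothesis $\varphi\sim_c\psi$ applies directly. Your write-up is in fact somewhat more careful than the paper's (you make the chain $\varphi^n=\varphi_0^n\sim_c\cdots\sim_c\varphi_m^n=\psi^n$ explicit and note that $\varphi^n$ is simplicial), but there is no substantive difference in approach.
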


\begin{proof} Without loss of generality we will focus on being contiguous and show that if $\varphi \sim_c \psi$, so is $\varphi^n \sim_c \psi^n$. 

Suppose that $\varphi \sim_c \psi$. If 
\begin{center}
$\sigma = \{(a_{11},a_{12},\dots,a_{1m}),(a_{21},a_{22},\dots,a_{2m}),\dots,(a_{n1},a_{n2},\dots,a_{nm})\}$
\end{center}
is a simplex in $K^m$, then one can say that $\pi_1(\sigma)=\{a_{11},a_{21},\dots,a_{n1}\}$, $\pi_2(\sigma)=\{a_{12},a_{22},\dots,a_{n2}\}$, $\dots$, $\pi_n(\sigma)=\{a_{1m},a_{2m},\dots,a_{nm}\}$ are simplices of K. Hence, for each $i=1,2,\dots,m$,
\begin{center}
$\varphi(\pi_i(\sigma)) \cup \psi(\pi_i(\sigma)) = (\varphi(a_{1i}),\cdots,\varphi(a_{ni}),\psi((a_{1i}),\cdots,\psi(a_{ni})$
\end{center}
belongs to $L$, so that $\varphi^n(\sigma) \cup \psi^n(\sigma)$ belongs to $L^m$.
\end{proof}

\begin{definition}\label{def 2.2}
We say that $\Omega \subset K^n$ is an $n-$Farber subcomplex if there is a simplicial map $\sigma : \Omega \to K$ such that $\Delta\circ \sigma$ $\sim \iota_{\Omega}$ where $\Delta : K \to K^n$, $\Delta(v) = (v,v,\dots,v)$ is the diagonal map and $\iota_{\Omega} : \Omega \hookrightarrow K^n$ is the inclusion map.
\end{definition}

\begin{definition}
The $n-$th discrete topological complexity $\TC_n(K)$ of a simplicial complex $K$ is the least non-negative integer $k$ such that $K^n$ can be covered by $k+1$ $n-$Farber subcomplexes. More precisely, $K^n = \Omega_0 \cup \cdots \cup \Omega_k$ and there exist simplicial maps $\sigma_j : \Omega_j \to K$ satisfying $\Delta \circ \sigma_j \sim \iota_j$ where $\iota_j : \Omega_j \hookrightarrow K^n$ are inclusions for each $j = 0,\dots,k$. 
\end{definition}

\begin{theorem}
\label{teo 3.4}
For a subcomplex $\Omega \subset K^n$, the followings are equivalent.

\begin{itemize}
    \item[(1)] $\Omega$ is an n-Farber subcomplex.  
    \item[(2)] $(\pi_i)_|{}_\Omega \sim (\pi_j)_|{}_\Omega$ for all $i,j\in\{1,2,\ldots,n\}$. 
    \item[(3)] One of the restrictions $(\pi_1)_|{}_\Omega, (\pi_2)_|{}_\Omega, \ldots, (\pi_n)_|{}_\Omega $ is a section (up to contiguity) of the diagonal map $\Delta:K \rightarrow K^n$.
\end{itemize}   
\end{theorem}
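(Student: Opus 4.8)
The plan is to prove the cycle of implications $(1)\Rightarrow(2)\Rightarrow(3)\Rightarrow(1)$. Throughout I will rely on two elementary facts. First, the relation $\sim$ is preserved under pre- and post-composition with simplicial maps: if $f\sim g$ and $h,k$ are simplicial with the appropriate domains and codomains, then $h\circ f\circ k\sim h\circ g\circ k$. This is immediate from the definition of contiguity, since a simplicial map carries simplices to simplices, so $h(f(\tau)\cup g(\tau))=(h\circ f)(\tau)\cup(h\circ g)(\tau)$ is again a simplex. Second, $\pi_i\circ\Delta=\id_K$ for every $i$ because $\pi_i(v,\dots,v)=v$, while $\pi_i\circ\iota_\Omega=(\pi_i)|_{\Omega}$ by definition.

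For $(1)\Rightarrow(2)$, suppose $\Omega$ is an $n$-Farber subcomplex, so there is a simplicial map $\sigma:\Omega\to K$ with $\Delta\circ\sigma\sim\iota_{\Omega}$. Post-composing with $\pi_i$ and using $\pi_i\circ\Delta=\id_K$ gives $\sigma=\pi_i\circ\Delta\circ\sigma\sim\pi_i\circ\iota_{\Omega}=(\pi_i)|_{\Omega}$ for every $i$. Hence every restriction $(\pi_i)|_{\Omega}$ lies in the common contiguity class of $\sigma$, and by transitivity of $\sim$ we conclude $(\pi_i)|_{\Omega}\sim(\pi_j)|_{\Omega}$ for all $i,j$. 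For $(3)\Rightarrow(1)$, if some $(\pi_i)|_{\Omega}$ is a section up to contiguity of $\Delta$, that is $\Delta\circ(\pi_i)|_{\Omega}\sim\iota_{\Omega}$, then setting $\sigma:=(\pi_i)|_{\Omega}$ exhibits $\Omega$ as an $n$-Farber subcomplex directly from Definition \ref{def 2.2}.

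The substantive step is $(2)\Rightarrow(3)$. I would write the inclusion coordinatewise as $\iota_{\Omega}=\big((\pi_1)|_{\Omega},\dots,(\pi_n)|_{\Omega}\big)$, whose $k$-th component is $(\pi_k)|_{\Omega}$, whereas $\Delta\circ(\pi_1)|_{\Omega}=\big((\pi_1)|_{\Omega},\dots,(\pi_1)|_{\Omega}\big)$ has all components equal to $(\pi_1)|_{\Omega}$. The key lemma, in the spirit of Proposition \ref{prop 2.9}, is that product maps into $K^n$ respect contiguity componentwise: if $f_k\sim_c g_k$ for each $k$, then $(f_1,\dots,f_n)\sim_c(g_1,\dots,g_n)$, since the $k$-th projection of the union of their vertex images on a simplex $\tau$ is exactly $f_k(\tau)\cup g_k(\tau)$, a simplex of $K$, and a vertex set of $K^n$ is a simplex precisely when all of its projections are. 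I would then replace components one at a time: using $(\pi_j)|_{\Omega}\sim(\pi_1)|_{\Omega}$, connect them by a finite chain of contiguous maps and, holding all other components fixed (each trivially contiguous to itself), transport this chain through the product construction to turn the $j$-th component into $(\pi_1)|_{\Omega}$ without leaving the contiguity class. Performing this for $j=2,3,\dots,n$ in turn yields $\iota_{\Omega}\sim\Delta\circ(\pi_1)|_{\Omega}$, so $(\pi_1)|_{\Omega}$ is a section up to contiguity, which is (3).

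The hard part is precisely this componentwise replacement: the hypothesis of (2) furnishes only that the individual restrictions share a contiguity class, not a single simultaneous contiguity, so one must justify changing one coordinate at a time rather than all at once. This is exactly what the product lemma above secures, and I expect it to be the one place where the definition of the categorical product $K^n$ must be used in an essential way. The remaining implications are then formal consequences of composition-invariance and transitivity of $\sim$.
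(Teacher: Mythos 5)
Your proposal is correct and follows essentially the same route as the paper: the same cycle $(1)\Rightarrow(2)\Rightarrow(3)\Rightarrow(1)$, with $\iota_\Omega=\bigl((\pi_1)|_\Omega,\dots,(\pi_n)|_\Omega\bigr)$ and $\Delta\circ\sigma=(\sigma,\dots,\sigma)$ compared coordinatewise. The only difference is that you make explicit the componentwise-replacement lemma for contiguity in the categorical product (changing one coordinate at a time), which the paper's proof of $(2)\Rightarrow(3)$ uses silently in the displayed equivalence $\bigl(\pi_1|_\Omega,\dots,\pi_n|_\Omega\bigr)\sim\bigl(\pi_{i_0}|_\Omega,\dots,\pi_{i_0}|_\Omega\bigr)$ -- a worthwhile detail to have spelled out.
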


\begin{proof}   

$(1) \Rightarrow (2)$: Suppose that $\Omega \subset K^n$ is an $n-$Farber subcomplex. Then there exists a simplicial map $\sigma : \Omega \rightarrow K $ such that $\Delta \circ \sigma \sim \iota_\Omega$. 

Observe that $\Delta\circ \sigma$ is an n-tuple of $\sigma$'s, that is, $\Delta\circ \sigma=(\sigma, \ldots, \sigma): \Omega \rightarrow K^n$ by $\Delta\circ \sigma(\omega)=(\sigma(\omega), \ldots, \sigma(\omega))$. 

On the other hand, the inclusion map $\iota_\Omega:\Omega\hookrightarrow K^n$ can be written as $$\iota_\Omega=(\pi_1|_{\Omega}, \ldots, \pi_n|_{\Omega}).$$ So we have 

\[
(\sigma, \ldots, \sigma)=\Delta\circ \sigma \sim (\pi_1|_{\Omega}, \ldots, \pi_n|_{\Omega}).
\]

\noindent Hence, $\sigma\sim \pi_i|_{\Omega}$ for each $i$.

$(2) \Rightarrow (3)$: Fix $i_0 \in \{1, 2, \ldots, n\}$ and suppose that $\pi_i|_\Omega \sim \pi_{i_0}|_\Omega$ for all $i\in\{1,2,\ldots,n\}$. Then 

\[
\iota_\Omega=\big( \pi_1|_\Omega , \ldots, \pi_n|_\Omega \big)\sim \big( \pi_{i_0}|_\Omega , \ldots, \pi_{i_0}|_\Omega \big) = \Delta \circ \pi_{i_0}|_\Omega.
\]

\noindent which means that one of the $\pi_{i_0}|_\Omega$'s is a section of $\Delta$.

$(3) \Rightarrow (1)$: Suppose that $(\pi_i{})_|{}_\Omega$ is a section (up to contiguity) of the diagonal map $\Delta$, for some $i\in \{ 1,2,\ldots,n\}$ and choose the simplicial map $\sigma:=(\pi_i{})_|{}_\Omega :\Omega\rightarrow K$. Then $\Omega$ is an $n-$Farber subcomplex.
\end{proof}

\begin{theorem}\label{dimension} $\TC_m(K)\leq \TC_{m+1}(K)$.
\end{theorem}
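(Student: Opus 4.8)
The plan is to manufacture, from a cover of $K^{m+1}$ by $(m+1)$-Farber subcomplexes, a cover of $K^m$ by the \emph{same number} of $m$-Farber subcomplexes; monotonicity then follows by counting. The bridge between the two powers will be the ``repeat the last coordinate'' map $j : K^m \to K^{m+1}$, $j(v_1,\dots,v_m) = (v_1,\dots,v_m,v_m)$. First I would check that $j$ is a well-defined injective simplicial map: writing $p_a$ for the projections off $K^{m+1}$ and $\pi_a$ for those off $K^m$, each coordinate of $j(\sigma)$ is a coordinate projection of $\sigma$ (the last two both equal to $\pi_m(\sigma)$), so $j$ carries simplices to simplices, and it is clearly injective on vertices. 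The compatibilities I will use throughout are $p_a \circ j = \pi_a$ for $a\le m$ and $p_{m+1}\circ j = \pi_m$.

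Next, writing $K^{m+1} = \Omega_0 \cup \cdots \cup \Omega_k$ with $k = \TC_{m+1}(K)$ and each $\Omega_i$ an $(m+1)$-Farber subcomplex, I set $\Omega_i' := \{\tau \in K^m : j(\tau) \in \Omega_i\}$. I would verify that each $\Omega_i'$ is genuinely a subcomplex of $K^m$: if $j(\tau)\in\Omega_i$ and $\tau'\subseteq\tau$, then $j(\tau')\subseteq j(\tau)$ is a face, hence lies in the subcomplex $\Omega_i$, so $\tau'\in\Omega_i'$. I would also check these cover $K^m$: for any $\tau\in K^m$ the simplex $j(\tau)$ lies in some $\Omega_i$, so $\tau\in\Omega_i'$.

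The substantive step is to show each $\Omega_i'$ is an $m$-Farber subcomplex, for which I would invoke the characterization in Theorem~\ref{teo 3.4}. Since $\Omega_i$ is $(m+1)$-Farber, condition (2) gives $p_a|_{\Omega_i}\sim p_b|_{\Omega_i}$ for all $a,b$. Restricting $j$ to $j|_{\Omega_i'} : \Omega_i'\to\Omega_i$ and using the compatibilities above, I get $\pi_a|_{\Omega_i'} = (p_a|_{\Omega_i})\circ (j|_{\Omega_i'})$ for every $a\in\{1,\dots,m\}$. Thus for all $a,b\in\{1,\dots,m\}$ the restrictions $\pi_a|_{\Omega_i'}$ and $\pi_b|_{\Omega_i'}$ are obtained by precomposing contiguity-class-equal maps with the single fixed map $j|_{\Omega_i'}$, so they lie in the same contiguity class; Theorem~\ref{teo 3.4}, (2)$\Rightarrow$(1), then makes $\Omega_i'$ an $m$-Farber subcomplex. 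This produces a cover of $K^m$ by $k+1$ $m$-Farber subcomplexes, whence $\TC_m(K)\le k = \TC_{m+1}(K)$.

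The one point I expect to require genuine care — the main obstacle — is the auxiliary fact that precomposition with a fixed simplicial map preserves contiguity classes, i.e.\ $f\sim g$ implies $f\circ h\sim g\circ h$. I would establish it first for a single contiguity step: if $f\sim_c g$ then for any simplex $\rho$ of the domain of $h$, $h(\rho)$ is a simplex and $f(h(\rho))\cup g(h(\rho))$ is a simplex by definition of $\sim_c$, so $f\circ h\sim_c g\circ h$; iterating along a contiguity sequence gives the general statement. Everything else (that $j$ is simplicial, that preimages are subcomplexes, and that they cover) is routine once this lemma is in hand.
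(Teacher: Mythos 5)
Your proof is correct and follows essentially the same route as the paper: pull back a cover of $K^{m+1}$ by $(m+1)$-Farber subcomplexes along a simplicial embedding $K^m \to K^{m+1}$, and use the characterization in Theorem~\ref{teo 3.4} together with the fact that precomposition with a fixed simplicial map preserves contiguity classes. The only (cosmetic) difference is your choice of embedding --- repeating the last coordinate, $j(v_1,\dots,v_m)=(v_1,\dots,v_m,v_m)$, instead of the paper's map $g(v^1,\dots,v^m)=(v^1,\dots,v^m,\omega)$ appending a fixed vertex $\omega$ --- and both work for identical reasons, since in each case $p_a\circ(\text{embedding})$ is a projection of $K^m$ for $a\le m$.
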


\begin{proof}
Let $\TC_{m+1}(K)=\ell$. Then there exist $\Omega_0, \ldots, \Omega_{\ell}$ $(m+1)-$Farber subcomplexes of $K^{m+1}$ covering $K^{m+1}$. By Theorem~\ref{teo 3.4}, 

\[
(\pi_1)_|{}_{\Omega_j} \sim (\pi_2)_|{}_{\Omega_j} \sim \ldots \sim (\pi_{m+1})_|{}_{\Omega_j}
\]

\noindent for each $j\in\{0,1,\ldots,\ell\}$, where $\pi_i:K^{m+1}\rightarrow K$ is the projection to the $i$-th factor.

If we show that there are $\Lambda_0, \ldots, \Lambda_{\ell}$ subcomplexes of $K^{m}$ covering $K^{m}$ such that $(\pi^1)_|{}_{\Lambda_j} \sim (\pi^2)_|{}_{\Lambda_j} \sim \ldots \sim (\pi^{m})_|{}_{\Lambda_j}$ for each $j\in\{0,1,\ldots,\ell\}$, where $\pi^i:K^m\rightarrow K$ is the projection to the $i$-th factor, then we are done. 

For a fixed vertex $\omega \in K$, define the simplicial map 

\[
g: K^m\rightarrow K^{m+1} , \hspace{0.1in} \text{by} \hspace{0.1in} g(v^1,\ldots, v^m)=(v^1,\ldots, v^m,\omega)
\]

\noindent and for each $j\in\{0,1,\ldots,\ell\}$, define the subcomplex $\Lambda_j := g^{-1}(\Omega_j)$ .

We have $\pi^i |_{\Lambda_j}=\pi_i |_{\Omega_j}\circ g : \Lambda_j \rightarrow \Omega_j \rightarrow K$, for $i\in\{1,\ldots,m\}$ and  $j\in\{0,1,\ldots,\ell\}$.

For every $j\in\{0,1,\ldots,\ell\}$, since $(\pi_i)_|{}_{\Omega_j} \sim (\pi_{k})_|{}_{\Omega_j}$ for each $i,k\in\{1,\ldots, m+1\}$, we have $(\pi^i)_|{}_{\Lambda_j} \sim (\pi^{k})_|{}_{\Lambda_j}$ for each $i,k\in\{1,\ldots, m\}$.
\end{proof}

\begin{theorem}
If $K \sim L$, then $\TC_n(K) = \TC_n(L)$, i.e., the $n-$th discrete topological complexity is an invariant of the strong homotopy type. 
\end{theorem}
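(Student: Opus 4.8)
The plan is to prove the two inequalities $\TC_n(K)\le \TC_n(L)$ and $\TC_n(L)\le \TC_n(K)$ separately; since a strong homotopy equivalence is symmetric in $K$ and $L$, it will suffice to establish one of them, say $\TC_n(K)\le \TC_n(L)$, and then interchange the roles of $K$ and $L$. So I would fix simplicial maps $\varphi\colon K\to L$ and $\psi\colon L\to K$ with $\psi\circ\varphi\sim \id_K$ and $\varphi\circ\psi\sim \id_L$, and set $k=\TC_n(L)$, so that $L^n=\Omega_0\cup\cdots\cup\Omega_k$ with each $\Omega_j$ an $n$-Farber subcomplex.

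First I would transport this cover back along the product map $\varphi^n\colon K^n\to L^n$, defined by $\varphi^n(v^1,\dots,v^n)=(\varphi(v^1),\dots,\varphi(v^n))$, by setting $\Theta_j:=(\varphi^n)^{-1}(\Omega_j)$. Because the preimage of a subcomplex under a simplicial map is again a subcomplex, and because every simplex of $K^n$ is carried by $\varphi^n$ into some $\Omega_j$, the subcomplexes $\Theta_0,\dots,\Theta_k$ cover $K^n$. It then remains to show that each $\Theta_j$ is an $n$-Farber subcomplex of $K^n$, which by Theorem~\ref{teo 3.4} is equivalent to verifying $(\pi_i)_|{}_{\Theta_j}\sim(\pi_{i'})_|{}_{\Theta_j}$ for all $i,i'\in\{1,\dots,n\}$, where $\pi_i\colon K^n\to K$ denotes projection to the $i$-th factor.

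The key step is the identity $\rho_i\circ\varphi^n=\varphi\circ\pi_i$ relating the projections $\rho_i\colon L^n\to L$ with the $\pi_i$. Restricting to $\Theta_j$ and writing $h=(\varphi^n)_|{}_{\Theta_j}\colon\Theta_j\to\Omega_j$ (which is well defined precisely because $\Theta_j=(\varphi^n)^{-1}(\Omega_j)$), this yields $\varphi\circ(\pi_i|_{\Theta_j})=(\rho_i|_{\Omega_j})\circ h$. Since $\Omega_j$ is $n$-Farber we have $\rho_i|_{\Omega_j}\sim\rho_{i'}|_{\Omega_j}$, and as contiguity classes are preserved under pre- and post-composition with a fixed simplicial map, precomposing with $h$ gives $\varphi\circ(\pi_i|_{\Theta_j})\sim\varphi\circ(\pi_{i'}|_{\Theta_j})$. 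Postcomposing with $\psi$ and invoking $\psi\circ\varphi\sim\id_K$ then collapses the chain
\[
\pi_i|_{\Theta_j}\sim(\psi\circ\varphi)\circ(\pi_i|_{\Theta_j})\sim(\psi\circ\varphi)\circ(\pi_{i'}|_{\Theta_j})\sim\pi_{i'}|_{\Theta_j}.
\]
Hence each $\Theta_j$ is $n$-Farber, $K^n$ is covered by $k+1$ such subcomplexes, and $\TC_n(K)\le\TC_n(L)$; the reverse inequality follows by symmetry.

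I expect the main obstacle to be the bookkeeping around composition and contiguity: one must confirm that $f\sim g$ implies both $h\circ f\sim h\circ g$ and $f\circ e\sim g\circ e$ for arbitrary simplicial maps $e$ and $h$. This is the engine that lets $\psi\circ\varphi\sim\id_K$ act on the chain above, and although it is routine from the definition of $\sim_c$ (in the same spirit as Proposition~\ref{prop 2.9}), I would isolate it as a short observation before the main argument. A secondary point worth stating explicitly is that the covering property of $\{\Theta_j\}$ uses only that $\varphi^n$ is simplicial, so that no hypothesis beyond being a simplicial map is needed to pull a cover back.
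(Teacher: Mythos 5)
Your proof is correct, and it follows the same basic strategy as the paper's: pull the Farber cover back through the $n$-fold product of the strong equivalence and get the other inequality by symmetry (the paper takes $\TC_n(K)=k$ and pulls the cover of $K^n$ back along $\psi^n$ to bound $\TC_n(L)$; you pull a cover of $L^n$ back along $\varphi^n$, which is the same move with the roles swapped). Where you genuinely diverge is in certifying that the preimages are $n$-Farber. The paper argues from the definition: it exhibits an explicit section $\widetilde\sigma=\varphi\circ\sigma\circ\psi^n|_{\widetilde\Omega}$ (note the $\varphi^n$ in the paper's displayed formula for $\widetilde\sigma$ is a typo for $\psi^n$, as its subsequent computation shows) and verifies $\Delta_L\circ\widetilde\sigma\sim\iota_{\widetilde\Omega}$ using the naturality square $\Delta_L\circ\varphi=\varphi^n\circ\Delta_K$ together with Proposition~\ref{prop 2.9}, which is what delivers $\varphi^n\circ\psi^n\sim 1_{L^n}$ (the content of the paper's Step 1, $K^n\sim L^n$). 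You instead route everything through criterion (2) of Theorem~\ref{teo 3.4} and the factorwise identity $\rho_i\circ\varphi^n=\varphi\circ\pi_i$, so the only homotopy-theoretic input is the single-factor relation $\psi\circ\varphi\sim\id_K$; Proposition~\ref{prop 2.9} and the product-level equivalence $K^n\sim L^n$ never enter your argument. This makes your route slightly more economical and more clearly factorwise, at the cost of leaning on Theorem~\ref{teo 3.4} rather than producing the section $\widetilde\sigma$ explicitly (which the paper's version hands you for free). Both proofs rely on the fact that contiguity classes are stable under pre- and post-composition with a fixed simplicial map --- the paper uses it silently in its Step 2 chain, for instance in passing from $\varphi^n\circ\Delta_K\circ\sigma\circ\psi^n\circ\iota_{\widetilde\Omega}$ to $\varphi^n\circ\iota_\Omega\circ\psi^n\circ\iota_{\widetilde\Omega}$ --- and you are right to insist on recording it as a preliminary observation; it follows immediately from the definition of $\sim_c$, in the same spirit as Proposition~\ref{prop 2.9}. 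Your covering argument for the $\Theta_j$ is also sound: since $\varphi^n$ is simplicial and the $\Omega_j$ are subcomplexes covering $L^n$, each simplex of $K^n$ lands in some $\Omega_j$ and hence lies in $\Theta_j$.
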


\begin{proof}
We proceed in three steps. 

\textit{Step 1.} Let us show that if $K\sim L$, then $K^n \sim L^n$. \\
Suppose that $K$ and $L$ are of the same strong homotopy type, then by definition, there are simplicial maps $\varphi, \psi$ satisfying $\varphi\circ \psi\sim 1_L$ and $\psi\circ \varphi\sim 1_K$. First consider the case $\varphi\circ \psi\sim 1_L$. By Proposition~\ref{prop 2.9}, we obtain

\[\varphi^n \circ \psi^n =(\varphi\circ \psi)^n \sim (1_L)^n = 1_{L^n}.
\]

Using same arguments, we show that $\psi^n \circ \varphi^n \sim 1_{K^n}$. Hence, we have $K^n \sim L^n$. 

\textit{Step 2.} Next we show that if $\Omega \subset K^n$ is an $n-$th Farber subcomplex, then so is $(\psi^n)^{-1}(\Omega)\subset L^n$.

There exists a simplicial map $\sigma : \Omega \to K$ so that $\Delta_K \circ \sigma \sim \iota_{\Omega}$ is satisfied, by definition, provided that $\Omega \subset K^n$ is an $n-$th Farber subcomplex. 

On the other hand, from the following commutative diagram, it follows that $\Delta_L \circ \varphi = \varphi^n \circ \Delta_K$ and $\Delta_K \circ \psi = \psi^n \circ \Delta_L$.

\[
\begin{tikzcd}
	& K \arrow[r, "\varphi"] \arrow[d, "\Delta_K"] & \arrow[l, shift left, "\psi"] L  \arrow[d, "\Delta_L"] \\
    & K^n \arrow[r, shift left, "\varphi^n"]  & \arrow[l, shift left, "\psi^n"] L^n 
\end{tikzcd}
\]

Combining these two facts, we have the diagram 

	\[
	\begin{tikzcd}
	& K \arrow[r, "\varphi"] \arrow[d, "\Delta_K"] & \arrow[l, shift left, "\psi"] L  \arrow[d, "\Delta_L"]  \\
	\Omega \arrow[r, "\iota_\Omega"]   \arrow{ur}{\sigma}  & K^n \arrow[r, shift left, "\varphi^n"]  & \arrow[l, shift left, "\psi^n"] L^n 
	& \arrow[l, shift left, "\iota_{\widetilde{\Omega}}"] \widetilde{\Omega} \arrow{ul}{}
	\end{tikzcd}
	\]

\noindent and we find out that $\widetilde{\Omega}:=(\psi^n)^{-1}(\Omega)\subset L^n$ is an $n-$Farber subcomplex as there exists a simplicial map 

\[\widetilde{\sigma} := \varphi\circ\sigma\circ (\varphi^n)|_{\widetilde{\Omega}}: \widetilde{\Omega}\rightarrow L
\]
 
\noindent satisfing the following

\begin{center}
\begin{eqnarray*}
\Delta_L \circ \widetilde{\sigma} &=& \Delta_L \circ \varphi \circ \sigma \circ \psi^n |_{\widetilde{\Omega}} \\
&=& \Delta_L \circ \varphi \circ \sigma \circ \psi^n \circ \iota_{\widetilde{\Omega}} \\
&=& \varphi^n \circ \Delta_K \circ \sigma \circ \psi^n \circ \iota_{\widetilde{\Omega}}\\
&\sim & \varphi^n \circ \iota_\Omega \circ \psi^n \circ \iota_{\widetilde{\Omega}}\\
&=& (\varphi^n \circ \psi^n)_{|\widetilde{\Omega}}\\
&\sim & 1_{L^n} \circ \iota_{\widetilde{\Omega}}\\
&=& \iota_{\widetilde{\Omega}} \hspace{0.03in}.
\end{eqnarray*}
\end{center}

\textit{Step 3.} In the last step, we show that $\TC_n(K) \leq \TC_n(L)$. Similarly,  $\TC_n(K) \geq \TC_n(L)$ can be showed and the result follows.

Let say $\TC_n(K) = k$. So there is a covering $K^n = \Omega_0 \cup \cdots \cup \Omega_k$ such that each $\Omega_j$ is an $n-$Farber subcomplex. From Step 2, each $\Lambda_j = (\psi^n)^{-1} (\Omega_j)$ is an $n-$Farber subcomplex for $j\in\{ 0, \dots ,k\}$ and they cover $L^n$. Hence, $\TC_n(K) \leq k$. 
\end{proof}

\begin{proposition}\cite[Lemma 4.4]{FMMV}\label{4.4} $K$ is path-edge connected if and only if any two constant simplicial maps to $K$ are in the same contiguity class.
\end{proposition}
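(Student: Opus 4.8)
The plan is to prove both implications by translating the notion of contiguity, as applied to constant maps, directly into the combinatorics of edge paths. The key observation to record first is that two constant simplicial maps $c_v, c_w \colon X \to K$ (sending every vertex of a nonempty domain $X$ to the fixed vertices $v$ and $w$ of $K$, respectively) are contiguous if and only if $\{v,w\}$ spans a simplex of $K$. Indeed, for any simplex $\sigma$ of $X$ one has $c_v(\sigma)\cup c_w(\sigma)=\{v,w\}$, so the contiguity condition $c_v\sim_c c_w$ is nothing but the requirement that $\{v,w\}$ be a simplex of $K$, i.e.\ that $v$ and $w$ either coincide or span an edge.

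For the forward implication, I would assume $K$ is edge path connected and let $c_v,c_w\colon X\to K$ be any two constant maps. Choosing a finite edge path $v=u_0,u_1,\dots,u_m=w$ joining $v$ to $w$, each consecutive pair $\{u_i,u_{i+1}\}$ spans an edge, so by the observation above the constant maps satisfy $c_{u_0}\sim_c c_{u_1}\sim_c\cdots\sim_c c_{u_m}$. This exhibits a contiguity sequence from $c_v$ to $c_w$, hence $c_v\sim c_w$.

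For the reverse implication, I would assume every pair of constant maps to $K$ lies in a single contiguity class, and fix vertices $v,w\in K$. Applying the hypothesis to the constant maps $c_v,c_w\colon X\to K$ on a nonempty domain $X$ yields a contiguity sequence $c_v=\varphi_0\sim_c\varphi_1\sim_c\cdots\sim_c\varphi_m=c_w$. The subtle point, which is the only real obstacle, is that the intermediate maps $\varphi_i$ need not be constant, so one cannot directly read off an edge path from the maps themselves. To circumvent this, I would fix a single vertex $x_0$ of $X$ and evaluate along the sequence: applying the contiguity condition $\varphi_i\sim_c\varphi_{i+1}$ to the $0$-simplex $\{x_0\}$ shows that $\{\varphi_i(x_0),\varphi_{i+1}(x_0)\}$ is a simplex of $K$. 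Consequently the vertices $v=\varphi_0(x_0),\varphi_1(x_0),\dots,\varphi_m(x_0)=w$ form a finite edge path from $v$ to $w$, proving that $K$ is edge path connected and completing the equivalence.
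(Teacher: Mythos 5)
Your proof is correct, and there is nothing to compare it against within the paper itself: the statement is imported from \cite[Lemma~4.4]{FMMV} without proof, and your argument --- the observation that constant maps $c_v, c_w \colon X \to K$ on a nonempty domain are contiguous precisely when $\{v,w\}$ spans a simplex of $K$, chaining this along an edge path for one implication and evaluating a contiguity sequence $\varphi_0 \sim_c \cdots \sim_c \varphi_m$ at a fixed vertex $x_0$ for the other --- is exactly the standard proof given in that reference. The only cosmetic point worth noting is that consecutive vertices $\varphi_i(x_0)$ and $\varphi_{i+1}(x_0)$ in your sequence may coincide, so to match the paper's Definition~1.1 literally (consecutive vertices span an edge) you should delete repetitions from the sequence before declaring it an edge path.
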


\begin{theorem}\label{scatTC} $\scat(K^{n-1}) \leq \TC_n(K)$ provided that $K$ is an edge-path connected simplicial complex $K$.
\end{theorem}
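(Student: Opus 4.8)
The plan is to imitate the slicing argument from the proof of Theorem~\ref{dimension}: cut $K^n$ down to $K^{n-1}$ by freezing the last coordinate at a fixed vertex, pull the Farber cover back along this slice, and check that the pulled-back pieces are categorical in $K^{n-1}$. Concretely, I would write $\TC_n(K)=k$ and fix a cover $K^n=\Omega_0\cup\cdots\cup\Omega_k$ by $n$-Farber subcomplexes. Fixing a vertex $\omega\in K$, I would define the simplicial map $g:K^{n-1}\to K^n$ by $g(v^1,\ldots,v^{n-1})=(v^1,\ldots,v^{n-1},\omega)$ and set $\Lambda_j:=g^{-1}(\Omega_j)$. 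Since $g$ is simplicial and the $\Omega_j$ cover $K^n$, every simplex $\tau$ of $K^{n-1}$ satisfies $g(\tau)\in\Omega_j$ for some $j$, so $\tau\in\Lambda_j$; thus the $\Lambda_j$ cover $K^{n-1}$. It then suffices to show that each $\Lambda_j$ is a categorical subcomplex of $K^{n-1}$, since this produces $k+1$ categorical subcomplexes covering $K^{n-1}$ and hence $\scat(K^{n-1})\le k=\TC_n(K)$.

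For the categorical property I would exploit Theorem~\ref{teo 3.4}: because each $\Omega_j$ is an $n$-Farber subcomplex, all restricted projections agree up to contiguity, $(\pi_1)|_{\Omega_j}\sim\cdots\sim(\pi_n)|_{\Omega_j}$. Writing $\pi^i:K^{n-1}\to K$ for the projections of $K^{n-1}$, the formula for $g$ gives $\pi^i|_{\Lambda_j}=(\pi_i)|_{\Omega_j}\circ g|_{\Lambda_j}$ for $i\in\{1,\ldots,n-1\}$, while $(\pi_n)|_{\Omega_j}\circ g|_{\Lambda_j}=c_\omega$ is the constant map at $\omega$. Since contiguity classes are preserved under precomposition with the simplicial map $g|_{\Lambda_j}$, for every $i\in\{1,\ldots,n-1\}$ I obtain $\pi^i|_{\Lambda_j}\sim(\pi_n)|_{\Omega_j}\circ g|_{\Lambda_j}=c_\omega$. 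Hence every coordinate of the inclusion $\iota_{\Lambda_j}=(\pi^1|_{\Lambda_j},\ldots,\pi^{n-1}|_{\Lambda_j})$ is contiguous to a constant map.

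The step needing the most care is passing from these coordinatewise contiguities to a single contiguity $\iota_{\Lambda_j}\sim(c_\omega,\ldots,c_\omega)$, the constant map of $K^{n-1}$ at the vertex $(\omega,\ldots,\omega)$. This is a tuple version of Proposition~\ref{prop 2.9}: a set of vertices of $K^{n-1}$ is a simplex exactly when each of its coordinate projections is a simplex of $K$, so a single contiguity step performed in one coordinate (leaving the others fixed) lifts to a contiguity step of the whole tuple, and chaining these coordinate by coordinate converts $(\pi^1|_{\Lambda_j},\ldots,\pi^{n-1}|_{\Lambda_j})$ into $(c_\omega,\ldots,c_\omega)$. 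This shows $\iota_{\Lambda_j}\sim c_{(\omega,\ldots,\omega)}$, so $\Lambda_j$ is categorical. The hypothesis that $K$ is edge-path connected enters here through Proposition~\ref{4.4}: it guarantees that any two constant maps to $K$, hence to $K^{n-1}$, lie in a single contiguity class, so the basepoint $\omega$ can be chosen uniformly and each $\Lambda_j$ is categorical with respect to a common vertex, legitimately realising the family as a $\scat$-cover of $K^{n-1}$. Combining the three ingredients, covering, coordinatewise contiguity to constants, and the tuple assembly, yields $\scat(K^{n-1})\le\TC_n(K)$.
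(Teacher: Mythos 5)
Your proposal is correct and follows essentially the same route as the paper's proof: both freeze one coordinate at a fixed vertex (you freeze the last, the paper the first), pull the $n$-Farber cover back along the resulting slice of $K^{n-1}$ into $K^n$, and show each pulled-back piece is categorical because the frozen coordinate forces every restricted projection into the contiguity class of a constant map. The only notable difference is cosmetic and in your favour: you route the argument through Theorem~\ref{teo 3.4} and make explicit the coordinatewise-to-tuple assembly (changing one coordinate at a time, which is valid by the categorical-product structure of $K^{n-1}$), a step the paper handles implicitly by manipulating a single contiguity chain in $K^n$ and projecting.
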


\begin{proof} Suppose $\TC_n(K)=k$. Then there exist subcomplexes $\Omega_0, \Omega_1, \ldots, \Omega_k$ of $K^n$ covering $K^n$ such that each $\Omega_i$ is $n-$Farber subcomplex, i.e., there exists $\sigma_i: \Omega_i\rightarrow K$ satisfying $\Delta_K \circ \sigma_i \sim \iota_{\Omega_i}$, where $\iota_{\Omega_i}: \Omega_i \hookrightarrow K^n$ is the inclusion and $\Delta_K: K\rightarrow K^n$ is the diagonal map.

For a fixed vertex $v_1^0$, define $\iota_0: K^{n-1}\rightarrow K^n$ by $\iota_0(\omega)=(v_1^0,\omega)$. Set $V_i:=\iota_0^{-1}(\Omega_i)\subset K^{n-1}$ subcomplex. Notice that $V_0, V_1, \ldots, V_k$ cover $K^{n-1}$. If we prove that each $V_i$ a categorical subcomplex of $K^{n-1}$, then we can conclude that $\scat(K^{n-1})\leq k$, and the result follows.
%If we show that each inclusion $\iota_{V_i}:V_i\hookrightarrow K^{n-1}$ and the constant map $c: V_i \rightarrow K^{n-1}$, defined by $c(\omega)=(v_1^0, v_2^0,\ldots,v_{n-1}^0)$, are in the same contiguity class, then we are done. 

For simplicity, we ignore the subscripts $i$. From the assumption, $\Delta_K \circ \sigma \sim \iota_{\Omega}$, that is, there exist $\varphi_{\bar{j}}: \Omega\rightarrow K^n$ simplicial maps, for ${\bar{j}}\in\{1, 2, \ldots, m\}$, such that $\varphi_1=\Delta_K\circ \sigma$, $\varphi_m=\iota_{\Omega}$ and $\varphi_{\bar{j}}$ is contiguous to $\varphi_{{\bar{j}}+1}:$

\[
\Delta_K \circ \sigma = \varphi_1 \sim_c \varphi_2 \sim_c \ldots \sim_c \varphi_m = \iota_{\Omega}.
\]

Denoting by $\pr_j: K^n \rightarrow K$ the projection to the $j$-th factor, take the compositions

\begin{eqnarray*}
\pr_j\circ (\Delta_K \circ \sigma) \circ \iota_0 \circ \iota_{V}&=&\pr_j\circ \varphi_1 \circ \iota_0 \circ \iota_{V}\\
&\sim_c & \pr_j\circ \varphi_2 \circ \iota_0 \circ \iota_{V} \\
&\sim_c & \ldots \\
&\sim_c &  \pr_j\circ \varphi_m \circ \iota_0 \circ \iota_{V} \\
&=& \pr_j\circ \iota_{\Omega}\circ \iota_0 \circ \iota_{V}
\end{eqnarray*}

\noindent for each $j=1,\ldots, n$. 

Notice that $\pr_j\circ \Delta_K \circ \sigma \circ \iota_0 \circ \iota_{V}(\omega)= \sigma(v_1^0,\omega)\in K$ for each $j\in\{1,\ldots,n\}$.

On the other hand 

\[
\pr_1\circ (\Delta_K \circ \sigma)\circ \iota_0 \circ \iota_{V} = v_1^0 
\]
\[
\pr_j\circ (\Delta_K \circ \sigma)\circ \iota_0 \circ \iota_{V} = \pi_{j-1} 
\]

\noindent for $j\in\{2,\ldots,n\}$, where $\pi_j:K^{n-1}\rightarrow K$ is the projection to the $j$-th factor. Here, we can write $\iota_{V}$ in terms of $\pi_j$'s. Thus,

\[
\iota_{V}=\big( \pi_1,\pi_2,\ldots, \pi_{n-1} \big) \sim \big( \pr_2\circ (\Delta_K\circ\sigma)\circ \iota_0\circ \iota_{V}, \ldots, \pr_n\circ (\Delta_K\circ\sigma)\circ \iota_0\circ \iota_{V} \big).
\]

Since $K$ is path-edge connected, by Lemma~\ref{4.4}, the constant map $c:V\rightarrow K^{n-1}$, $c(\omega)=(v_1^0, \ldots, v_{n-1}^0)$ can be realised as another constant map $\bar{c}: V\rightarrow K^{n-1}$, $\bar{c}(\omega)=(v_1^0, \ldots, v_1^0)$. Therefore, $\iota_{V}\sim c$.
\end{proof}

\begin{theorem}\label{2.4} $\TC_n(K) \leq \scat(K^n)$ provided that $K$ is an edge-path connected simplicial complex $K$.
\end{theorem}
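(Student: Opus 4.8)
The plan is to prove $\TC_n(K) \leq \scat(K^n)$ by taking a cover of $K^n$ by categorical subcomplexes and showing that each such subcomplex is automatically an $n$-Farber subcomplex when $K$ is edge-path connected. Let $\scat(K^n) = k$, so there exist categorical subcomplexes $\Omega_0, \Omega_1, \ldots, \Omega_k$ of $K^n$ covering $K^n$. By the definition of a categorical subcomplex, for each $j$ the inclusion $\iota_{\Omega_j} : \Omega_j \hookrightarrow K^n$ is in the same contiguity class as a constant map $c : \Omega_j \to K^n$ with value some fixed vertex $w_j = (w_j^1, \ldots, w_j^n) \in K^n$.

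The strategy is then to verify condition (3) of Theorem~\ref{teo 3.4}: I would show that one of the projection restrictions $(\pi_i)|_{\Omega_j}$ is a section up to contiguity of the diagonal map $\Delta : K \to K^n$, which by that theorem is equivalent to $\Omega_j$ being an $n$-Farber subcomplex. First I would apply the projection $\pi_i : K^n \to K$ to the contiguity $\iota_{\Omega_j} \sim c$; by Proposition~\ref{prop 2.9} (or directly, since composing with a simplicial map preserves contiguity classes), this yields $(\pi_i)|_{\Omega_j} = \pi_i \circ \iota_{\Omega_j} \sim \pi_i \circ c$, and $\pi_i \circ c$ is the constant map on $\Omega_j$ with value $w_j^i \in K$. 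Hence each restriction $(\pi_i)|_{\Omega_j}$ is contiguous to a constant map.

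The key step is to upgrade these individual statements into the relation $(\pi_1)|_{\Omega_j} \sim (\pi_2)|_{\Omega_j} \sim \cdots \sim (\pi_n)|_{\Omega_j}$, which is condition (2) of Theorem~\ref{teo 3.4}. This is exactly where edge-path connectedness enters: the constant maps $\pi_i \circ c$ (with values $w_j^i$) and $\pi_{i'} \circ c$ (with values $w_j^{i'}$) are constant simplicial maps $\Omega_j \to K$, and by Proposition~\ref{4.4} any two constant simplicial maps to an edge-path connected complex lie in the same contiguity class. Chaining these equivalences gives $(\pi_i)|_{\Omega_j} \sim \pi_i \circ c \sim \pi_{i'} \circ c \sim (\pi_{i'})|_{\Omega_j}$ for all $i, i'$, which is condition (2). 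By Theorem~\ref{teo 3.4}, each $\Omega_j$ is therefore an $n$-Farber subcomplex.

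Since the $\Omega_0, \ldots, \Omega_k$ are $n$-Farber subcomplexes covering $K^n$, we conclude $\TC_n(K) \leq k = \scat(K^n)$. I expect the main obstacle to be the bookkeeping in the middle step: one must be careful that Proposition~\ref{4.4} is applied to constant maps landing in $K$ (the individual factors) rather than in $K^n$, and that the contiguity class of a composition with a fixed simplicial map genuinely passes through as claimed. The role of edge-path connectedness is essential and cannot be dropped, since without it the distinct constant values $w_j^i$ need not be joinable, and this parallels its use in the companion inequality of Theorem~\ref{scatTC}.
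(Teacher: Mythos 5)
Your proof is correct and takes essentially the same route as the paper: cover $K^n$ by categorical subcomplexes, push the contiguity $\iota_{\Omega_j}\sim c$ through the projections $\pi_i$, and conclude via condition (2) of Theorem~\ref{teo 3.4}. You are in fact slightly more careful than the paper's own proof, which leaves implicit the appeal to Proposition~\ref{4.4} (needed to join the distinct constant values $w_j^i$ of the projected constant maps) that you spell out explicitly.
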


\begin{proof}
Suppose that $\scat(K^n) = k$. Then there is a categorical covering $\{U_0,U_1,\ldots, U_k\}$ of $K^n$. If we show that $U_i$ is an $n-$Farber subcomplex, for each $i\in\{0,1,\ldots,k\}$, then the proof is concluded.
%Then there is a covering $\{\Omega_0, \ldots, \Omega_k\}$ of $K^n$ such that the inclusion map $\iota_i: U_i \rightarrow K^n$ and the constant map $c_i:U_i\rightarrow K^n$, given by $c_i(v_1,\ldots, v_n)=(v_1^0,v_2^0,\ldots, v_n^0)$, are in the same contiguity class, i.e., $\iota_i\sim c_i$.

%Since $K$ is edge-path connected, all $v_i^0$'s can be chosen to be the same. Let say equal to $v_0$. If we could show that $U_i$ is an $n-$Farber subcomplex for each $i$, then we are done. 

Since we have $\iota\sim c$ where $c:U\rightarrow K^n$ is a constant map and $\iota: U \rightarrow K^n$ is the inclusion map, there exists a sequence of simplicial maps $h_t: V \rightarrow K^n$ for $t\in\{1,\ldots,m\}$ such that $h_0=\iota$, $h_m=c$ and  $(h_{t},h_{t+1})$ are contiguous for all $t\in\{1,2,\ldots,m-1\}$. 

Now let $\pi_j:K^n\rightarrow K$ denote the projection map to the $j$-th factor. Hence, $\pi_j\circ h_{t}$ and $\pi_j \circ h_{t+1}$ are contiguous for all  $t\in\{1,2,\ldots,m-1\}$. From the fact that

\[
\pi_j\circ\iota=\pi_j\circ h_0 \sim \pi_j\circ c,  
\]

\noindent it follows that $\pi_j\circ\iota$'s are all in the same contiguity class for all $j$. By Theorem~\ref{teo 3.4}, $U$ is an $n-$Farber subcomplex.
\end{proof}

The following lemma, which can be proved by induction, is a generalisation of Theorem 5.5 in \cite{FMMV2} and it will be used to prove the later corollary. 

\begin{lemma}\label{lemma2.1} For finite simplicial complexes $K_1, K_2, \ldots, K_m$, we have 
\[
\scat(K_1 \times K_2 \times \ldots \times K_m)+1 \leq (\scat K_1 +1)(\scat K_2 +1)\ldots (\scat K_m +1).
\]
\end{lemma}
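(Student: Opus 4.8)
The plan is to prove the inequality by induction on $m$, using the two-factor case as the engine. The case $m=2$, namely $\scat(K_1 \times K_2) + 1 \leq (\scat K_1 + 1)(\scat K_2 + 1)$, is precisely Theorem 5.5 of \cite{FMMV2}, so I would take it as the base step rather than reproving it (and the case $m=1$ is a trivial equality). The only genuinely new ingredient beyond that base case is the passage from $m-1$ to $m$ factors.

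For the inductive step, suppose the inequality holds for any collection of $m-1$ finite simplicial complexes. I would first invoke the associativity of the categorical product: there is a simplicial isomorphism
\[
K_1 \times K_2 \times \cdots \times K_m \cong (K_1 \times K_2 \times \cdots \times K_{m-1}) \times K_m,
\]
and, since $\scat$ is invariant under simplicial isomorphism (indeed under strong homotopy type), the left-hand side of the claimed inequality is unchanged when the first $m-1$ factors are grouped together. Applying the two-factor case to the pair $(K_1 \times \cdots \times K_{m-1},\, K_m)$ then gives
\[
\scat(K_1 \times \cdots \times K_m) + 1 \leq \big(\scat(K_1 \times \cdots \times K_{m-1}) + 1\big)\,(\scat K_m + 1),
\]
and the induction hypothesis bounds $\scat(K_1 \times \cdots \times K_{m-1}) + 1$ above by $(\scat K_1 + 1)\cdots(\scat K_{m-1} + 1)$. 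Since every factor $\scat K_i + 1$ is a positive integer, substituting this bound into the displayed inequality preserves the direction of the estimate and yields the desired product bound.

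Because the substantive topological content is already packaged inside the cited two-factor inequality, the only point requiring real care is the associativity step. One must confirm that the categorical product of simplicial complexes is associative up to isomorphism — this follows directly from the defining universal property, or concretely from the identification $V(K_1 \times \cdots \times K_m) = V(K_1) \times \cdots \times V(K_m)$ together with the characterisation of simplices via the coordinate projections — and that $\scat$ descends through this isomorphism. I expect this bookkeeping to be the main, and essentially the only, obstacle; everything else is a mechanical application of the base case and the inductive hypothesis.
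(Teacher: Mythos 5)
Your proposal is correct and follows exactly the route the paper indicates: the paper gives no detailed argument, stating only that the lemma ``can be proved by induction'' from the two-factor case, Theorem 5.5 of \cite{FMMV2}, which is precisely your base step. Your write-up simply makes explicit the bookkeeping the paper leaves implicit --- associativity of the categorical product up to simplicial isomorphism and the invariance of $\scat$ under such isomorphisms --- so it matches the intended proof.
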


\begin{corollary}\label{c}
Let $K$ be an abstract simplicial complex. Then $K$ is strongly collapsible if and only if $\TC_n(K)=0$.
\end{corollary}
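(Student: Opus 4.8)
The plan is to reduce both conditions to statements about contiguity classes of simplicial self-maps and then connect them through a single ``slice'' map. First I would record the characterisation of strong collapsibility that follows directly from the definition: $K \sim \{v_0\}$ is equivalent to the existence of a vertex $v_0 \in K$ with $\id_K \sim c_{v_0}$, where $c_{v_0}:K\to K$ is the constant map. Indeed, any simplicial map into a one-vertex complex is constant, so the two maps realising $K \sim \{v_0\}$ collapse to this single contiguity-class equation. On the other side, $\TC_n(K)=0$ means precisely that $K^n$ is itself a single $n$-Farber subcomplex, and by Theorem~\ref{teo 3.4} this is equivalent to $\pi_1 \sim \pi_2 \sim \cdots \sim \pi_n$ as simplicial maps $K^n \to K$.

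For the forward implication I would either invoke the strong homotopy invariance of $\TC_n$ established above---$K \sim \{v_0\}$ gives $\TC_n(K)=\TC_n(\{v_0\})=0$, the latter being immediate since $\{v_0\}^n$ is a single vertex and hence trivially an $n$-Farber subcomplex---or argue directly. In the direct approach, if $\id_K \sim c_{v_0}$ then precomposing the contiguity sequence with each projection $\pi_i$ yields $\pi_i = \id_K \circ \pi_i \sim c_{v_0}\circ \pi_i$, and the right-hand side is the constant map $K^n \to K$ with value $v_0$, independent of $i$. Hence $\pi_i \sim \pi_j$ for all $i,j$, so by Theorem~\ref{teo 3.4} the complex $K^n$ is an $n$-Farber subcomplex and $\TC_n(K)=0$.

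For the converse, I would assume $\TC_n(K)=0$, so that $\pi_1 \sim \cdots \sim \pi_n$ on $K^n$. Fix a vertex $v_0 \in K$ and consider the slice map $j : K \to K^n$, $j(v)=(v,v_0,\ldots,v_0)$, which is simplicial. Precomposing the contiguity $\pi_1 \sim \pi_2$ with $j$ gives $\id_K = \pi_1 \circ j \sim \pi_2 \circ j = c_{v_0}$, so $\id_K$ lies in the same contiguity class as a constant map and $K$ is strongly collapsible.

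The only point needing care---and the one I would flag as the main technical step---is the stability of contiguity, and hence of contiguity classes, under pre- and post-composition with a simplicial map: if $\varphi \sim_c \psi : K^n \to K$ and $f$ is simplicial, then $\varphi \circ f \sim_c \psi \circ f$, because $f$ sends a simplex to a simplex and the defining union-of-images condition is thereby preserved. This is routine but is exactly what legitimises both the slice-map reduction and the projection argument; everything else is bookkeeping on top of Theorem~\ref{teo 3.4} and the definition of strong collapsibility.
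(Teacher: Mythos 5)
Your proof is correct, but it takes a genuinely different route from the paper's. The paper deduces the forward implication from the Lusternik--Schnirelmann machinery: strong collapsibility gives $\scat(K)=0$, the product inequality of Lemma~\ref{lemma2.1} then forces $\scat(K^n)+1\leq(\scat(K)+1)^n=1$, and Theorem~\ref{2.4} ($\TC_n(K)\leq\scat(K^n)$) finishes; for the converse it invokes monotonicity $\TC_2(K)\leq\TC_n(K)$ (Theorem~\ref{dimension}) and quotes the $n=2$ case, Corollary~4.7 of \cite{FMMV}. You instead work directly at the level of contiguity classes: $\TC_n(K)=0$ iff $\pi_1\sim\cdots\sim\pi_n$ (Theorem~\ref{teo 3.4} applied with $\Omega=K^n$), strong collapsibility iff $\id_K\sim c_{v_0}$, and the two conditions are exchanged via the composition-stability of contiguity (which you rightly flag as the one step needing verification, and which does hold by the routine simplex-image argument you sketch) together with the slice map $j(v)=(v,v_0,\ldots,v_0)$ --- the same device the paper itself uses as $g$ and $\iota_0$ in Theorems~\ref{dimension} and~\ref{scatTC}. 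Your route buys self-containedness: it bypasses $\scat$ and the external citation entirely, in effect reproving the cited Corollary~4.7 for all $n$ at once, and it avoids the edge-path-connectedness hypothesis carried by Theorem~\ref{2.4}, which the paper's forward direction uses without remarking that strong collapsibility supplies it. The paper's route buys brevity given the machinery already established. One point tacit in both arguments: the converse requires $n\geq 2$ (you use $\pi_1\sim\pi_2$, the paper uses $\TC_2\leq\TC_n$), consistent with the paper's implicit convention that the higher complexities start at $n=2$.
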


\begin{proof}

We have $\scat{K}=0$, since K is strongly collapsible. On the other hand, by Lemma~\ref{lemma2.1}, we obtain $\scat(K^n)+1 \leq (\scat{K}+1)^n$. Combining these two, we have $\scat(K^n)+1 \leq 1$. Hence, $\scat(K^n)=0$. It follows from Theorem~\ref{2.4} that $\TC_n(K)=0$.

On the other hand, if $\TC_n(K)=0$, then by Theorem~\ref{dimension} $\TC_2(K)\leq \TC_n(K)=0$. Hence the result follows from Corollary 4.7 in \cite{FMMV}.
\end{proof}

%The following example verifies that the other way around of Corollary~\ref{c} is valid. But it still lacks of a proof.

%\begin{corollary}
%If a simplicial complex K is strongly collapsible then the diagonal map $\Delta:K\rightarrow K^n$ possesses a global homotopic section (in terms of contiguity %classes,that is, which means there exists a simplicial map $\sigma: K^n \rightarrow K$ such that the composition $\Delta \circ \sigma \sim 1_(K^n)$).
%\end{corollary}

%\begin{proof}
%Let K is strongly collapsible. It is clear from Corollary \ref{c} that $TC_n(K)=0$. Hence, $\Delta$ admits a global homotopic section. 
%\end{proof}

\subsection{Geometric Realisation}

It is proved that $|K^2|\approx |K|^2$ and $|K\times K|\sim |K|^2$, see Theorem 10.21 and  Proposition 15.23 in \cite{K}. Moreover, the higher dimensional versions are also valid, as mention in Remark 5.2 in \cite{FMMV2}. Combining these facts with Lemma 5.1 in \cite{FMMV}, we get the following lemma.

\begin{lemma}\label{lemma2.6} There is a homotopy equivalence $u: |K|^n \rightarrow |K^n|$ such that the following diagram is commutative for each $i\in\{1,\ldots,n\}$

\[
\begin{tikzcd}
{|K|^n} \arrow{r}{u} \arrow[swap]{dr}{p_i} & {|K^n|} \arrow{d}{|\pi_i|} \\
    & {|K|}  
\end{tikzcd}
\]
	
\noindent where $p_i: |K|^n \rightarrow |K|$ and $\pi_i:K^n\rightarrow K$ are projections.

\end{lemma}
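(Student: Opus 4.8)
The plan is to assemble this lemma from the three ingredients the authors have already cited rather than to build the homotopy equivalence from scratch. First I would fix notation: let $t\colon |K|^n \to |K^n|$ be the homeomorphism (or homotopy equivalence) coming from the identification $|K|^n \approx |K^n|$ in the geometric realisation of the categorical product, as provided by Theorem 10.21 and Proposition 15.23 of \cite{K}, whose higher-dimensional analogues are noted in Remark 5.2 of \cite{FMMV2}. The content of the lemma is not merely the \emph{existence} of such a map but its \emph{compatibility} with the projections, so the real work is checking that the triangle commutes.

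Next I would invoke Lemma 5.1 in \cite{FMMV}, which (in the $n=2$ case) gives exactly the factorisation of each geometric projection through the realisation of the simplicial projection, i.e.\ $p_i \simeq |\pi_i| \circ u$. My approach is to take $u$ to be the map furnished by the cited homotopy equivalence and then verify, factor by factor, that $|\pi_i| \circ u$ agrees with $p_i$. Since a simplicial projection $\pi_i\colon K^n \to K$ realises to $|\pi_i|\colon |K^n| \to |K|$, and since the realisation functor is compatible with the product projections on the nose at the level of vertices and simplices, the composite $|\pi_i| \circ u$ sends a point of $|K|^n$ to its $i$-th coordinate in $|K|$, which is precisely $p_i$. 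I would argue this either by a direct coordinate computation on a point lying in a single product cell, or more cleanly by naturality of the comparison map $u$ with respect to the projection maps, reducing the claim to the diagram for the functor $|{-}|$ applied to $\pi_i$.

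The step I expect to be the main obstacle is pinning down in what sense the diagram commutes: strictly, or only up to homotopy. The cited results mix a homeomorphism $|K^2|\approx|K|^2$ with a homotopy equivalence $|K\times K|\sim|K|^2$, so I would need to be careful about which comparison map $u$ is being used and whether the triangle commutes literally or up to homotopy. Because the lemma as stated only asserts commutativity (and the downstream application, a lower bound via the usual $\TC_n(|K|)$, only needs commutativity up to homotopy), I would be content to establish the triangle up to homotopy, which follows from Lemma 5.1 of \cite{FMMV} together with the naturality of the realisation of projections.

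Finally, I would remark that the induction on $n$ implicit in passing from the $n=2$ statements of \cite{K} and \cite{FMMV} to the general $n$ is routine: one factors $K^n \cong K^{n-1}\prod K$ and applies the two-factor homotopy equivalence together with the inductive hypothesis, checking at each stage that the projections remain compatible. This is the same inductive pattern used in Lemma~\ref{lemma2.1}, so I would simply cite Remark 5.2 of \cite{FMMV2} for the general case and devote the proof to the compatibility square.
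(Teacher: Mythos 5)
Your proposal follows essentially the same route as the paper, which gives no independent argument for this lemma but obtains it exactly as you do: by combining the product comparison $|K^n|\approx |K|^n$ from Theorem 10.21 and Proposition 15.23 of \cite{K} (with the higher-dimensional versions noted in Remark 5.2 of \cite{FMMV2}) with the projection-compatibility of Lemma 5.1 in \cite{FMMV}, the general $n$ case being the routine induction you describe. Your one hedge --- settling for commutativity up to homotopy --- is harmless, since the only downstream use (Theorem~\ref{theorem2.7}) needs just $p_i \circ \iota_{F_\ell} \simeq |\pi_i| \circ \iota_{|\Omega_\ell|}$, though the paper, following \cite{FMMV}, states the triangle as strictly commutative.
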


\begin{theorem}\label{theorem2.7} $\TC_n(|K|) \leqslant \TC_n(K)$.

\end{theorem}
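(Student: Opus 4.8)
The plan is to compare the discrete covering of $K^n$ with an open covering of $|K|^n$ through the homotopy equivalence $u$ of Lemma~\ref{lemma2.6}, and then to read off $\TC_n(|K|)$ from the classical criterion for higher topological complexity. Recall that for a path-connected space $X$ (here $X=|K|$, which is path-connected whenever $K$ is edge-path connected) one has $\TC_n(X)\le k$ if and only if $X^n$ admits an open cover $U_0,\ldots,U_k$ such that on each $U_j$ the $n$ coordinate projections $p_1,\ldots,p_n\colon X^n\to X$ restrict to pairwise homotopic maps. Indeed, a local section of the evaluation fibration $e_n\colon X^{J_n}\to X^n$ over $U_j$, where $J_n$ is the $n$-leaved ``spider'', is precisely a choice of homotopies tying each $p_i|_{U_j}$ to one common map, and conversely such homotopies assemble into a section. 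It therefore suffices to produce a cover of this kind of size $\TC_n(K)+1$.

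First I would fix $\TC_n(K)=k$ and choose $n$-Farber subcomplexes $\Omega_0,\ldots,\Omega_k$ covering $K^n$. By Theorem~\ref{teo 3.4} the restricted projections satisfy $\pi_i|_{\Omega_j}\sim\pi_{i'}|_{\Omega_j}$ for all $i,i'\in\{1,\ldots,n\}$ and all $j$. Since geometric realization is functorial and carries contiguous simplicial maps---hence maps in the same contiguity class---to homotopic continuous maps, the realizations $\bigl|\pi_i|_{\Omega_j}\bigr|\colon|\Omega_j|\to|K|$ are pairwise homotopic for every $j$.

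The next step is to replace the closed sets $|\Omega_j|$ by open sets, which is the main technical point. Each $|\Omega_j|$ is a subcomplex of the CW complex $|K^n|$, so it possesses an open neighborhood $W_j\subset|K^n|$ together with a deformation retraction $r_j\colon W_j\to|\Omega_j|$. Because the $\Omega_j$ cover $K^n$ we have $\bigcup_j|\Omega_j|=|K^n|$, hence $\bigcup_j W_j=|K^n|$; setting $U_j:=u^{-1}(W_j)\subseteq|K|^n$ therefore yields an open cover of $|K|^n$ with $k+1$ members. Lemma~\ref{lemma2.6} gives $p_i=|\pi_i|\circ u$, and the deformation retraction gives $|\pi_i|\big|_{W_j}\simeq \bigl|\pi_i|_{\Omega_j}\bigr|\circ r_j$. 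Combining these with the homotopies of the previous paragraph, one obtains for each $j$ and all $i,i'$ the chain
\[
p_i|_{U_j}=|\pi_i|\circ u|_{U_j}\simeq \bigl|\pi_i|_{\Omega_j}\bigr|\circ r_j\circ u|_{U_j}\simeq \bigl|\pi_{i'}|_{\Omega_j}\bigr|\circ r_j\circ u|_{U_j}\simeq p_{i'}|_{U_j}.
\]
Thus each $U_j$ satisfies the projection criterion, and the criterion above yields $\TC_n(|K|)\le k=\TC_n(K)$.

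The only genuine obstacle is the passage from the closed covering by realized subcomplexes to the open covering demanded by sectional category; this is resolved by the standard deformation-retract open neighborhood of a subcomplex of a CW complex, and one must check that the homotopies among the projections survive both this retraction and the pullback along $u$. The remaining ingredients---homotopy invariance of realization applied to Theorem~\ref{teo 3.4}, together with the commuting triangle of Lemma~\ref{lemma2.6}---are routine.
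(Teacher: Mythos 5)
Your proposal is correct and follows essentially the same route as the paper: let $\TC_n(K)=k$, realize the $n$-Farber cover, use Theorem~\ref{teo 3.4} plus the fact that contiguity classes realize to homotopies to get $|\pi_i|\circ\iota_{|\Omega_j|}\simeq|\pi_{i'}|\circ\iota_{|\Omega_j|}$, and pull everything back along the homotopy equivalence $u$ of Lemma~\ref{lemma2.6} via $p_i=|\pi_i|\circ u$. The only difference is your extra step of thickening each $|\Omega_j|$ to an open neighborhood deformation retract before pulling back: the paper works directly with the closed sets $F_\ell=u^{-1}(|\Omega_\ell|)$ and stops there, implicitly relying on the fact that for polyhedra such closed coverings suffice to bound $\TC_n$, so your version is, if anything, more complete on exactly the point the paper glosses over.
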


\begin{proof} Let $\TC_n(K) = k$. So there exist $n-$Farber subcomplexes $\Omega_0, \dots \Omega_k$ of $K^n$ covering $K^n$. By Theorem~\ref{lemma2.1},  $\pi_i \circ \iota_{\Omega_\ell}$ and $\pi_j \circ \iota_{\Omega_\ell}$ are in the same contiguity class for each pair $i,j\in\{1,2,\ldots, n\}$ and each $\ell\in\{0,1,\ldots,k\}$. Considering the geometric realisations of these simplicial maps, we have

\[
 |\pi_i \circ \iota_{\Omega_\ell}| \backsimeq |\pi_j \circ \iota_{\Omega_\ell}|
\]

since being in the same contiguity class can be realised as being homotopic in the continuous realm (see for details \cite{S}). 

On the other hand, $ |\pi_i \circ \iota_{\Omega_\ell}| =  |\pi_i|\circ |\iota_{\Omega_\ell}|$ and $|\iota_{\Omega_\ell}|=\iota_{|\Omega_\ell|}$. Hence, we get  

\[
|\pi_i|\circ \iota_{|\Omega_\ell |} \backsimeq |\pi_j| \circ \iota_{|\Omega_\ell |}. 
\]

Now consider the preimage $F_\ell = u^{-1}(\Omega_\ell) \subset |K|^n$ for each $\ell=0,1,\ldots,k$ where $u$ is a homotopy equivalence as given in Lemma~\ref{lemma2.6}. Here, all $F_\ell$'s are closed and the subsets $F_0, F_1, \ldots, F_k$ cover $|K|^n$. Moreover, 

\[
p_i \circ \iota_{F_\ell} = |\pi_i|\circ u \circ \iota_{F_\ell} = |\pi_i| \circ \iota_{|\Omega_\ell |} \backsimeq |\pi_j| \circ \iota_{|\Omega_\ell |} = |\pi_j|\circ u \circ \iota_{F_\ell} = p_j \circ \iota_{F_\ell}.
\]

\noindent Here, the first and the last equalities follow from Lemma~\ref{lemma2.6}. This completes the proof.
\end{proof}

The following is an example for the strict case of the inequality in Theorem~\ref{theorem2.7}.

\begin{example} Consider the simplicial complex in Figure 1. 

\[
\begin{tikzpicture}[scale=0.5]
 
\draw [fill=gray, gray] (0,0) -- (0,0) -- (4,6.8) -- (8,0);
\draw (0,0) -- (3,2.8);
\draw (0,0) -- (4,1);
\draw (4,6.8) -- (3,2.8);
\draw (4,6.8) -- (5,2.8);
\draw (8,0) -- (4,1);
\draw (8,0) -- (5,2.8);
\draw (3,2.8) -- (4,1);
\draw (3,2.8) -- (5,2.8);
\draw (5,2.8) -- (4,1);
\draw (0,0) -- (4,6.8);
\draw (0,0) -- (8,0);
\draw (8,0) -- (4,6.8);
\end{tikzpicture}
\]
\[
\textit{Figure 1}
\]

As mentioned in \cite{BM}, $K$ is not strongly collapsible. By Example 3.3 in \cite{FMV}, $\scat(K)=1$. Theorem~\ref{scatTC} yields that $\scat(K)\leq \TC_2(K)$. From Theorem~\ref{dimension}, it follows that $1\leq\TC_n(K)$ for all $n\geq 2$. On the other hand, the geometric realisation of the simplicial complex in Figure 1 is homeomorphic to a disc. Thus, $\TC_n(|K|)=0$ for any $n\geq 2$.
\end{example}

\end{document}